\let\epsilon=\varepsilon
\let\phi=\varphi
\def\({\left(}
\def\){\right)}
\def\dist{{\operatorname{dist}}}
\def\e{{\varepsilon}}
\newtheorem{theorem}{Theorem}
\newtheorem{Lem}{Lemma}
\newtheorem{Cor}{Corollary}
\theoremstyle{definition}
\def\d{{\partial}}
\newcommand{\M}{\mathfrak M}
\newcommand{\R}{\mathbb R}
\newcommand{\di}{\mathrm{d}}
\newcommand{\Bilip}{\mathrm{Bilip}}
\def\mitrii#1{}
\begin{document}

\title{A direct proof of one Gromov's theorem}
\author{Yu.~D.~Burago, S.~G.~Malev, D.~I.~Novikov$^\ddag$}


\address{Steklov Mathematical Institute at St.Petersburg,
Russia} \email{yuburago@pdmi.ras.ru}
\address{Department of Mathematics, Weizmann Institute of Science,
Rehovot, Israel} \email{sergey.malev@weizmann.ac.il}
\address{Department of Mathematics, Weizmann Institute of Science,
Rehovot, Israel} \email{dmitry.novikov@weizmann.ac.il}
\thanks{The first author is partially supported by RFBR grant 05-01-00939}
\thanks{The third author is supported by Samuel M. Soref \& Helene K. Soref Foundation}

\begin{abstract}
We give a new  proof of the Gromov theorem:  For any $C>0$ and integer $n>1$
there exists a function
$\Delta_{C,n}$ such that if the
Gromov--Hausdorff distance between complete Riemannian $n$-manifolds $V$ and $W$ is
not greater than $\delta$, absolute values of 
their
sectional curvatures $|K_{\sigma}|\leq C$, and their
injectivity radii $\geq 1/C$, then
the Lipschitz distance between $V$ and $W$  is less than
$\Delta_{C,n}(\delta)$ and  $\Delta_{C,n}\to 0$ as $\delta\to 0$.
\end{abstract}
 \maketitle

\section{Introduction}

Denote by  $\M(\rho,C,n)$ the  class of complete
 $n$-dimensional
Riemannian manifolds $V$  with   section curvatures
$|K_\sigma|\leq C<\infty$ and the injective radii  $r_{in}(V)\geq \rho$,
where $C$, $\rho$ are some positive constants.
 There are two
well-known metrics on this class: the  Lipschitz metric and the
Gromov--Hausdorff metric.

Recall that the  Lipschitz distance $\di_{Lip}(X,Y)$ between  metric spaces $X$, $Y$
is defined as
\[
\di_{Lip}(V,W)=\ln\inf\{k:\Bilip_k(V,W)\neq\emptyset\},
\]
where $\Bilip_k(V,W)$ denotes the class of all bi-Lipschitz
homeomorphisms between $V$ and $W$ with bi-Lipschitz constant
$k\ge1$. By bi-Lipschitz constant of a homeomorphism $\zeta$
we mean maximum of Lipschitz constants for  $\zeta$ and
$\zeta^{-1}$.

Instead of the Gromov--Hausdorff metric, we  use a metric,
equivalent to it (see, for instance, \cite{BBI}). We preserve
notation $\di_{GH}$ for this metric. By definition,  the distance
$\di_{GH}(V,W)$  is  the infimum of all $\delta>0$ with the
property that there exists a mapping $\chi:V\rightarrow W$ such
that  $\chi(V)$ is a $\delta$-net in $W$ and  $\chi$ changes
distances by at most on $\delta$:
\[
\left|\di_W(\chi(x),\chi(y))-\di_V(x,y)\right|<\delta
\]
for  any points $x,y\in V$. Note that $\chi$ is not supposed to
be continuous.

The purpose of this paper is to give a direct proof for the following Gromov theorem.

\begin{theorem}[Gromov \cite{Gromov}, page 379]\label{thm:main}
For  given $\rho>0$, $C>0$ and an integer $n>1$, there exists a positive function
$\Delta=\Delta_{(C,n,\rho)}$ such that $\Delta(\delta)\to 0$ as $\delta\to +0$ and if
$V,W\in\M(\rho,C,n)$ satisfy the condition $\di_{GH}(V,W)<\delta$ then
$$
\di_{Lip}(V,W)<\Delta(\delta).
$$
\end{theorem}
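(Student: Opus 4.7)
The plan is to replace the given $\delta$-approximation $\chi$ by a smooth map $f\colon V\to W$ built by averaging $\chi$ over a sufficiently fine net, and then to show that $f$ is a bi-Lipschitz homeomorphism whose distortion tends to $1$ as $\delta\to 0$. Throughout, one fixes an auxiliary scale $\e$ with $\delta\ll\e\ll\rho$; the conclusion will come from optimising in $\e$ at the end.

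First I would record the two comparison facts supplied by the two-sided curvature bound together with the lower bound on the injectivity radius. On a ball $B(p,r)\subset V$ of radius $r\le\rho/2$, the exponential map $\exp_p\colon B(0,r)\subset T_pV\to B(p,r)$ is a $(1+\eta(r))$-bi-Lipschitz diffeomorphism, with $\eta(r)\to 0$ as $r\to 0$ and depending only on $C$ and $n$ (Rauch comparison). Second, on balls of sufficiently small radius one has a well-defined Riemannian centre of mass \`a la Karcher, assigning to every finitely supported probability measure the unique minimiser of the weighted squared distance, depending smoothly on the measure and reducing, in normal coordinates, to the ordinary barycentre up to the same error $\eta(r)$.

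With these tools the map $f$ is constructed as follows. Fix a maximal $\e$-separated net $\{x_i\}\subset V$ and set $y_i:=\chi(x_i)\in W$; then $\{y_i\}$ is $(1+O(\delta/\e))$-bi-Lipschitz equivalent to $\{x_i\}$ and is still an approximate $\e$-net of $W$. Choose a partition of unity $\{\psi_i\}$ on $V$ subordinate to the cover $\{B(x_i,2\e)\}$ with $|d\psi_i|\le \const/\e$ (for instance, Shepard-type weights $\psi_i(x)=h\bigl(\di_V(x,x_i)/\e\bigr)\big/\sum_j h\bigl(\di_V(x,x_j)/\e\bigr)$ for a fixed cut-off $h$), and define
\[
f(x):=\mathrm{argmin}_{z\in W}\sum_i \psi_i(x)\,\di_W(z,y_i)^2,
\]
the Karcher mean of the active $y_i$ with weights $\psi_i(x)$. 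This is well defined because all active $y_i$ lie in a ball of radius $O(\e)\ll\rho$ centred near $\chi(x)$. The analogous construction applied to $\chi^{-1}$ on a net in $W$ yields a candidate inverse $g\colon W\to V$. The Lipschitz estimate is then a derivative computation: $df_x$ decomposes as the ``data'' map sending an infinitesimal motion of $x$ to the induced variation of the weights and thence to a variation of the centre of mass in $W$, composed with the Karcher projection. By Rauch comparison the Karcher projection is $(1+\eta(\e))$-close to an isometry between two nearly Euclidean balls, and since $\chi$ distorts distances by at most $\delta$ on a net of scale $\e$, the data map is $(1+O(\delta/\e))$-bi-Lipschitz. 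Composing yields
\[
\di_{Lip}(V,W)\le\log\bigl(1+\eta(\e)+O(\delta/\e)\bigr),
\]
and choosing, say, $\e=\sqrt{\delta}$ drives the right hand side to $0$ with $\delta$.

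The principal obstacle I expect is not the Lipschitz bound itself, which flows from Rauch comparison once the averaging machinery is in place, but the fact that $f$ must be a \emph{homeomorphism}: the input $\chi$ is merely a discontinuous correspondence, so injectivity and surjectivity of $f$ have to be extracted a posteriori. I would handle this by showing that $g\circ f$ and $f\circ g$ are $C^0$-close to the identities with Lipschitz constants close to $1$, so that a standard fixed-point or Brouwer-degree argument (using that $V$ and $W$ are $n$-manifolds of the same dimension) upgrades them to homeomorphisms; it then follows that $f$ and $g$ are themselves mutually inverse homeomorphisms. A secondary technical point is that the bound $|d\psi_i|\le\const/\e$ must be uniform despite local variations in the combinatorial complexity of the net, which is controlled by a Bishop--Gromov packing bound inside $\M(\rho,C,n)$.
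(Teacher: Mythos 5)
There is a genuine gap, and it sits exactly at the heart of the matter: your map $f$ averages the \emph{fixed} points $y_i=\chi(x_i)$ with $x$-dependent weights, so $df_x$ comes \emph{entirely} from the variation of the weights. The weight derivatives are of size $\e^{-1}$, while the active points $y_i$ are spread about the centre of mass at scale $\e$ (they are images of a net of mesh $\e$, not of nearby points); pairing these gives a contribution of size $O(1)$, not $O(\delta/\e)$, so the claim that the ``data map is $(1+O(\delta/\e))$-bi-Lipschitz'' is unjustified and in general false. Already in the Euclidean line with $x_i=i\e$, $y_i=x_i$ (i.e.\ $\chi$ the identity on the net, $\delta=0$), the Shepard-type average $f(x)=\sum_i\psi_i(x)x_i$ is a mollified projection onto the net whose derivative oscillates with the cut-off profile; it can be made nearly $0$ on large portions of each cell (and $f$ is exactly locally constant wherever a single weight equals $1$ and the others vanish), so $f$ is not $(1+o(1))$-bi-Lipschitz and need not be locally injective at all. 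Consequently the displayed bound $\di_{Lip}\le\log(1+\eta(\e)+O(\delta/\e))$ does not follow from your construction, and the subsequent degree/fixed-point argument has nothing to run on, since it presupposes Lipschitz constants close to $1$.

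What is missing is the intermediate layer that the paper builds precisely to neutralize this term: first construct \emph{local diffeomorphisms} $\varphi_i=\exp_{w_i}\circ L_i\circ\log_{v_i}$ on balls $B_{4\e}(v_i)$, where $L_i$ is the linear map matching $\log_{v_i}$ of nearby net points to $\log_{w_i}$ of their $\chi$-images; prove via Rauch/Karcher comparison that each $\varphi_i$ is $(1+C\delta)$-bi-Lipschitz and that any two overlapping $\varphi_i,\varphi_j$ are $C\delta$-close in the $C^1$ sense; and only then take the Karcher mean of the \emph{moving} points $\varphi_i(x)$ with weights $\psi_i(x)$. In that construction the dangerous term $\sum_i d\psi_i(a)\,d_y\dist(\varphi_i(x),y)$ is harmless because all $\varphi_i(x)$ lie within $C\delta$ of $h(x)$, so it is bounded by $C\e^{-1}\cdot\delta=C\e$, while the main term $\sum_i\psi_i(x)\,d^2_{**}\dist(\varphi_i(x),y)$ is a convex combination of maps $C\delta$-close to one isometry. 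Your scheme skips the local maps and their $C^1$-compatibility, which is the actual content of the theorem; the averaging of raw net data only yields $C^0$-closeness, which cannot control $\di_{Lip}$. (Your closing remarks on $g\circ f$, $f\circ g$ and Bishop--Gromov packing are reasonable, but they are downstream of the missing derivative estimate.)
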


 In contrast to Gromov's proof using axillary embeddings of the manifolds
 $V$, $W$ into an Euclidean space of a large dimension, we directly construct
a bi-Lipschitz diffeomorphism  $h(x)$ between $V$ and $W$ with a required
bi-Lipschitz constant $\Delta(\delta)$. The map $h(x)$
is obtained by gluing together ``local maps'' $\varphi_i$ with help of
partition of unity. The maps $\varphi_i$ are defined on some balls
$B_{2\e}(v_i)\subset V$
which form a locally finite covering of $V$. This gluing is based on Karcher's
center of mass
technique \cite{Karcher}. The resulting map turns out to be  bi-Lipschitz with the
required constant since the
mappings $\varphi_i$ are $C^1$-close one to another on the intersection of their
domains.

To justify publishing  our proof, note that though ideas of Gromov's
proof explained very clear in his book, some details are
 omitted in his exposition.

Later on  $C$ denotes different constants
depending on $n=\dim V=\dim W$ only. We always assume
$\delta$ to be sufficiently small, $\delta<\delta_0$, where
$\delta_0$ depends on $n$ only. All these constants can be  computed explicitly,
if such a need arises.

\section{Preparations}

By a suitable rescaling of $V,W$, one can get rid of one parameter
and assume that the absolute values of section curvatures smaller than
$\delta$ and the injectivity radii are bigger than $\delta^{-1}$.
Also we can assume that and
$d_{GH}<\delta$.

We always suppose that $0<\delta\ll 1$ and denote
\begin{equation}\label{eq:epsilon}
\e^2=\delta\ll 1.
\end{equation}

\subsection{$\e$-Orthonormal base}
We say that a basis $\{e_1,\dots ,e_n\}\subset\R^n$ is $\e$-orthonormal
if $|(e_i, e_j)-\delta_{ij}|<\e$ for all $i,j=1,\dots,n$, where
$\delta_{ij}$ is the Kronecker symbol. A linear map $L\colon M\to N$ of two
Euclidean spaces is $\e$-close to isometry if $\|L-Q\|<\e$ for some
isometry $Q\colon M\to N$.

\begin{Lem}\label{lem:e-orthonormal}
Let $\{\xi_i\}$ be an $\e$-orthonormal base of $\R^n$,
$\e<\frac 1 {2n}$.
Let $L:\R^n\to\R^n$ be a linear operator.

If $\|L(\xi_i)\|<\delta$, $i=1,\dots,n$ then $\|L\|<2\sqrt{n}\delta$.

Assume that $8n\sqrt{n}\delta<1$ and
$$
|\langle L\xi_i,L\xi_j\rangle-\langle\xi_i,\xi_j\rangle|<\delta,
\quad i,j=1,..,n.
$$
Then $L$ is $8n\sqrt{n}\delta$-close to an isometry.
\end{Lem}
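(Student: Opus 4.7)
For the first statement the plan is a direct expansion in the given basis. For $v\in\R^n$, I would write $v=\sum_i c_i\xi_i$; since $|\langle\xi_i,\xi_j\rangle-\delta_{ij}|<\e$ and $(\sum|c_i|)^2\le n\sum c_i^2$, a short computation gives
\[
\|v\|^2 \;=\; \sum_i c_i^2\|\xi_i\|^2 + \sum_{i\ne j}c_ic_j\langle\xi_i,\xi_j\rangle \;\ge\; (1-n\e)\sum_i c_i^2 \;\ge\; \tfrac12\sum_i c_i^2
\]
under the hypothesis $\e<1/(2n)$. Then by the triangle inequality and Cauchy--Schwarz,
\[
\|Lv\| \;\le\; \sum_i|c_i|\,\|L\xi_i\| \;<\; \delta\sqrt{n\sum_i c_i^2} \;\le\; \sqrt{2n}\,\delta\,\|v\| \;<\; 2\sqrt n\,\delta\,\|v\|,
\]
which is the desired bound on $\|L\|$.

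For the second statement the plan is to exhibit an explicit isometry $Q$ close to $L$ and then estimate $\|L-Q\|$. The cleanest choice is the orthogonal factor of the polar decomposition $L=QP$ with $P=(L^\ast L)^{1/2}$: the hypothesis together with the first part forces $L$ to be invertible (the vectors $L\xi_i$ span because their Gram matrix is close to the identity), so $Q$ is a genuine orthogonal map, and $\|L-Q\|=\|Q(P-I)\|=\|P-I\|$. The problem thus reduces to bounding $\|P-I\|$, and since $|\sqrt{1+s}-1|\le|s|$ on $(-1,\infty)$, it further reduces (via the spectral theorem applied to the self-adjoint $L^\ast L - I$) to bounding $\|L^\ast L-I\|$.

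The main step is then to show $\|L^\ast L - I\|\le 2n\delta$. I would observe that the lemma's hypothesis reads exactly
\[
\bigl|\langle(L^\ast L - I)\xi_i,\xi_j\rangle\bigr| \;=\; \bigl|\langle L\xi_i,L\xi_j\rangle - \langle\xi_i,\xi_j\rangle\bigr| \;<\; \delta,
\]
and, since $L^\ast L - I$ is self-adjoint, $\|L^\ast L - I\|$ equals the supremum of $|\langle(L^\ast L-I)v,v\rangle|$ over unit $v$. Expanding $v=\sum c_i\xi_i$, the estimate $\sum c_i^2\le 2\|v\|^2$ from the first part yields $|\langle(L^\ast L - I)v,v\rangle|<\delta(\sum|c_i|)^2\le n\delta\sum c_i^2\le 2n\delta$, hence $\|L^\ast L - I\|\le 2n\delta$. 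Combining the three reductions gives $\|L-Q\|\le 2n\delta < 8n\sqrt n\,\delta$, with room to spare (the assumption $8n\sqrt n\,\delta<1$ guarantees $2n\delta<1$, so the spectral inequality applies).

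The hard part will simply be keeping constants straight; there is no conceptual obstacle. The constant $8n\sqrt n$ in the statement is not tight for the polar-decomposition route and presumably reflects an alternative, more constructive route: build $Q$ by orthonormalizing $\{\xi_i\}$ and $\{L\xi_i\}$ separately via Gram--Schmidt, and then apply the first part of the lemma to $L-Q$ at the end, which contributes the extra $\sqrt n$-factor. Either approach stays entirely within elementary linear algebra on $\R^n$.
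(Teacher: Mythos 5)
Your proposal is correct. The first half coincides with the paper's argument almost verbatim: expand a unit vector in the $\e$-orthonormal basis, use $\e<\frac1{2n}$ to get $\sum c_i^2\le 2$, then the triangle inequality and Cauchy--Schwarz give $\|L\|\le\sqrt{2n}\,\delta<2\sqrt n\,\delta$. For the second half the paper offers no proof at all --- it only remarks that ``the second part can be obtained the same way,'' presumably meaning a direct coordinate computation in which one builds a candidate isometry from the bases and applies the first part to the difference, which is exactly the route you guess is responsible for the constant $8n\sqrt n$. Your argument is genuinely different and, in my view, cleaner: identifying the hypothesis as an entrywise bound on $\langle (L^\ast L-I)\xi_i,\xi_j\rangle$, using self-adjointness to convert it into $\|L^\ast L-I\|\le 2n\delta$ via the same $\sum c_i^2\le 2$ estimate, and then passing through the polar decomposition $L=QP$ with $\|L-Q\|=\|P-I\|\le\|L^\ast L-I\|$ (the inequality $|\sqrt{1+s}-1|\le|s|$ on the spectrum). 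This yields the sharper constant $2n\delta$, with the hypotheses $\e<\frac1{2n}$ and $8n\sqrt n\,\delta<1$ amply sufficient for the invertibility of $L$ (the Gram matrix of $\{L\xi_i\}$ is within $n(\e+\delta)<1$ of the identity) and for the spectral step. What the paper's hinted route buys is that it stays at the level of the first part's bare-hands computation, with no polar decomposition or spectral theorem; what yours buys is a complete, shorter proof with a better constant, which is all that the rest of the paper needs since only the order of magnitude $C\delta$ is ever used.
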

\begin{proof}
Lemma could be easily proved by straightforward  calculation.
We give a proof to the first part, the second part can be obtained  the same way.

Let $x=\sum x_i\xi_i$ be a unit vector. Then
$$
1=\langle x,x\rangle=\sum_{i,j}x_ix_j\langle\xi_i,\xi_j\rangle
\ge\sum_{i=1}^nx_i^2-\varepsilon\left(\sum_{i=1}^n|x_i|\right)^2
\ge(1-n\varepsilon)\sum_{i=1}^nx_i^2.
$$
Therefore $\sum x_i^2\le(1-\varepsilon n)^{-1}\le 2.$ Thus
$$
\|Lx\|=\|\sum_{i=1}^nx_iL(\xi_i)\|\le\delta\sum_{i=1}^n|x_i|
\le\delta \sqrt{n\sum x_i^2}\le 2\delta\sqrt n.
$$
\end{proof}

\subsection{On comparison theorems and exponential mappings.}
Denote by
$J$ a Jacobi vector field
along a geodesic $\gamma\colon [0,2]\to V$. As usually,
$\dot J(t)=\frac{D}{dt}J$ means the covariant derivative of $J$ along $\gamma$.
We need a known comparison theorem of Rauch-style, see
\cite{chavel}, 7.4, or the original
Karcher's paper \cite{Karcher}.  We
formulate the theorem in the form adopted to our case.

\begin{theorem}\label{thm:rauch original}
Suppose that all section curvatures  $|K|\le \delta$ and
$0<\delta\ll 1$ (it is enough to have $\delta<10^{-2}$).

 Then
\begin{multline}\label{eq:Rauch Jacobi}
|J(0)| \cos(\sqrt{\delta}t)+|\dot
J(0)|\delta^{-1/2}\sin(\sqrt{\delta}t) \le |J(t)|\\ \le |J(0)|
\cosh(\sqrt{\delta}t)+|\dot J(0)|\delta^{-1/2}\sinh(\sqrt{\delta}t).
\end{multline}
\end{theorem}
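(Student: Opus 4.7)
The plan is to prove the theorem by Sturm-type comparison of $|J|$ against Jacobi fields on the model spaces of constant sectional curvature $\pm\delta$, i.e.\ the classical Rauch--Berger argument.

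First, I would split $J = J^{\parallel} + J^{\perp}$ into components tangent and normal to $\dot\gamma$. Because $R(\dot\gamma,\dot\gamma)\dot\gamma = 0$, the Jacobi equation decouples: $J^{\parallel}$ satisfies $\ddot J^{\parallel}=0$ and is therefore linear in $t$, while $J^{\perp}$ satisfies the normal Jacobi equation with $|R(\dot\gamma, J^{\perp})\dot\gamma| \le \delta |J^{\perp}|$ by hypothesis.

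Second, for the scalar $f(t) := |J^{\perp}(t)|$, a direct computation at $f>0$ from the Jacobi equation yields
\[
f\ddot f = |\dot J^{\perp}|^2 - (\dot f)^2 - \langle R(\dot\gamma, J^{\perp})\dot\gamma,\, J^{\perp}\rangle.
\]
Cauchy--Schwarz gives $|\dot J^{\perp}|^2\ge(\dot f)^2$, and the curvature bound $|K|\le\delta$ gives $|\langle R(\dot\gamma,J^{\perp})\dot\gamma, J^{\perp}\rangle|\le\delta f^2$. Hence $\ddot f+\delta f \ge 0$, which by Sturm's comparison yields
\[
f(t)\;\ge\; f(0)\cos(\sqrt{\delta}\,t)+\dot f(0)\,\delta^{-1/2}\sin(\sqrt{\delta}\,t)
\]
on $[0,2]$, well before the first zero of the comparison since $\delta<10^{-2}$. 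For the upper bound the analogous scalar inequality $\ddot f-\delta f\le 0$ is not immediate, because the term $|\dot J^{\perp}|^2-(\dot f)^2$ has the wrong sign. The standard remedy is to apply Rauch's theorem in its vector-valued form: parallel-transport $J^{\perp}(0),\dot J^{\perp}(0)$ to the constant-curvature $-\delta$ model, where the resulting Jacobi field has norm bounded by $f(0)\cosh(\sqrt{\delta}\,t)+|\dot J^{\perp}(0)|\delta^{-1/2}\sinh(\sqrt{\delta}\,t)$ by triangle inequality, and invoke the Rauch--Berger inequality (valid because $K_V\ge-\delta$) to transfer this bound to $V$.

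Third, I would reassemble $J = J^{\parallel}+J^{\perp}$ via $\big||J^{\parallel}|-|J^{\perp}|\big|\le |J|\le |J^{\parallel}|+|J^{\perp}|$. The linear estimate $|J^{\parallel}(t)|\le|J^{\parallel}(0)|+t|\dot J^{\parallel}(0)|$ fits inside the curved envelopes via $1\le\cosh(\sqrt{\delta}\,t)$ and $t\le\delta^{-1/2}\sinh(\sqrt{\delta}\,t)$ on the upper side, and via the symmetric $\cos(\sqrt{\delta}\,t)\le 1$, $\delta^{-1/2}\sin(\sqrt{\delta}\,t)\le t$ on the lower side, giving (\ref{eq:Rauch Jacobi}). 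The main obstacle I anticipate is twofold: the non-smoothness of $f=|J^{\perp}|$ at its zeros, handled by working on maximal open intervals where $f>0$ and extending by continuity; and the lack of a purely scalar Sturm inequality giving the upper bound, which forces the vector-valued Rauch comparison (or, equivalently, a matrix Riccati argument for $\nabla J^{\perp}\circ(J^{\perp})^{-1}$). Some additional bookkeeping is needed to replace the scalar $\dot f(0)$ by $|\dot J(0)|$ in the final inequalities, but this is straightforward in the regime where the theorem is applied.
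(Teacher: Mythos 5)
The paper never proves Theorem~\ref{thm:rauch original}: it is quoted as a known Rauch--Karcher comparison estimate (with a pointer to Chavel, 7.4, and to Karcher's paper), so your attempt has to be judged against the classical argument rather than anything in the text. Your overall plan (strip off the tangential part, Sturm comparison for $|J^{\perp}|$, model-space comparison for the upper bound) is indeed that classical route, but two of your steps, as written, do not deliver the stated inequality. First, the lower bound with $|\dot J(0)|$ is false for general initial data: in flat space ($|K|\le\delta$ trivially) the Jacobi field $J(t)=(1-t)e$ has $|J(1)|=0$, while the claimed lower bound at $t=1$ is $\cos\sqrt{\delta}+\delta^{-1/2}\sin\sqrt{\delta}\approx 2$. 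Your Sturm argument correctly produces $\dot f(0)=\langle J(0),\dot J(0)\rangle/|J(0)|$, and the passage from $\dot f(0)$ to $|\dot J(0)|$ is not ``bookkeeping'': it is impossible unless $J(0)=0$ or $J(0)$ and $\dot J(0)$ are positively aligned. The theorem as printed shares this imprecision, and the paper only uses it with $J(0)=0$, $\dot J(0)=\xi/r$ (to estimate $d_a\exp_v$), where $\dot f(0)=|\dot J(0)|$; you should make that restriction explicit instead of deferring it.

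Second, the reassembly is lossy. Since $J^{\parallel}$ and $J^{\perp}$ are pointwise orthogonal, combine them through $|J|^2=|J^{\parallel}|^2+|J^{\perp}|^2$ and Cauchy--Schwarz (in the form $AB+ab\le\sqrt{A^2+a^2}\,\sqrt{B^2+b^2}$), which yields exactly the stated right-hand side; your triangle inequality $|J|\le|J^{\parallel}|+|J^{\perp}|$ loses a factor up to $\sqrt2$ (because $|J^{\parallel}(0)|+|J^{\perp}(0)|$ can equal $\sqrt2\,|J(0)|$), and the lower-bound version $\bigl|\,|J^{\parallel}|-|J^{\perp}|\,\bigr|\le|J|$ gives nothing of the required form, whereas with $J(0)=0$ the Pythagorean combination works because $t\ge\delta^{-1/2}\sin(\sqrt{\delta}\,t)$. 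Relatedly, for the upper bound on $|J^{\perp}|$ with both $J^{\perp}(0)$ and $\dot J^{\perp}(0)$ nonzero, the clean route is to split $J^{\perp}=J_1+J_2$ with $J_1(0)=J^{\perp}(0)$, $\dot J_1(0)=0$ and $J_2(0)=0$, $\dot J_2(0)=\dot J^{\perp}(0)$, apply Berger's comparison to $J_1$ and Rauch's to $J_2$ (both need only $K\ge-\delta$), and add; the single ``vector-valued Rauch for mixed initial data'' you invoke is not one of the standard comparison theorems. With these repairs your argument goes through, and the $\sqrt2$-type losses would anyway be harmless for the paper, which absorbs all such constants into $C$.
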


\medskip
As usually, we apply this estimate to the exponential and logarithmic  mappings.
The latter, $\log_v$, is  the inverse of the exponential mapping
$\exp_v:T_{v}V\to V$. By assumptions,
it is well defined on balls of the radius $\delta^{-1}\gg 1$.

Denote by $\tau_{v_1,v_2}\colon T_{v_1}V\to T_{v_2}V$ the parallel
translation  along the minimal geodesic (which is always unique in
our conditions) joining  points $v_1,v_2\in V$. Let $a,\xi,\eta\in
T_vV$, with $\|a\|=r<2$,  and denote $v'=\exp_v a$. Let $\gamma$
be the geodesic connecting $v$ with $v'$,
$\gamma(t)=\exp_v(ta/r)$. Obviously, $d_a\exp_v(\xi)=J(r)$, where
$J$ is the Jacobi field along $\gamma$ with the initial values
$J(0)=0, \dot J(0)=\xi/r$.

 Denote by $\eta(t)$  the parallel translation of $\eta$ to
$\gamma(t)$ along $\gamma$, so $\dot \eta=0$, and denote
$f(t)=\langle\eta(t),J(t)\rangle$. Evidently, $f(0)=0$, $\dot
f(0)=\langle\xi/r,\eta\rangle$, and
$$
|\ddot f(t)|=\langle\eta(t),\ddot J(t)\rangle=\langle\eta(t),
R(J,\dot\gamma)\dot\gamma\rangle
\le \delta \ \|\eta\|\ \|\xi\|\left(1+\frac \delta 2 t^2\right)
\le 2 \delta \ \|\eta\|\ \|\xi\|
$$
by Theorem~\ref{thm:rauch original}. This implies that
$$
|f(r) -\langle\xi,\eta\rangle|\le\delta r^2\ \|\eta\|\ \|\xi\|.
$$
Therefore, $\|d_a\exp_v(\xi)-\tau_{v,v'}\xi\|\le \delta r^2\|\xi\|$,
which proves the following Lemma.

\begin{Lem}\label{lem:Rauch} Let $a\in T_vV$, $v'=\exp_v a$. If
$r=\dist(v,v')<2$, then the  parallel translation
$\tau_{v,v'}\colon T_vV\to T_{v'}V$ and the differential
$(d\exp_v)_a\colon T_vV\to T_{v'}V$ are $r^2\delta$-close one to
the other. In particular, $\exp_v$ is $(1+\delta
r^2)$-bi-Lipschitz  on balls $B_{r}(v)\in V$  of radius $r<2$.

Similar statements hold for $d\log_w$ and $\tau_{x,w}$.
\end{Lem}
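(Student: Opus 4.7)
The statement essentially packages the Jacobi field computation carried out in the paragraph immediately preceding it, so my plan is to make that derivation fully explicit and then harvest the bi-Lipschitz conclusion from it.

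First I would unwind the definition of $(d\exp_v)_a$ via Jacobi fields: for any $\xi\in T_vV$, the vector $(d\exp_v)_a(\xi)$ equals $J(r)$, where $J$ is the Jacobi field along the unit-speed geodesic $\gamma(t)=\exp_v(ta/r)$ with $J(0)=0$ and $\dot J(0)=\xi/r$. To compare $J(r)$ to $\tau_{v,v'}\xi$, I would pair it against an arbitrary parallel vector field $\eta(t)$ obtained by parallel translation of $\eta\in T_vV$ along $\gamma$, and consider $f(t)=\langle\eta(t),J(t)\rangle$. Since $\dot\eta\equiv 0$, one has $f(0)=0$ and $\dot f(0)=\langle\xi/r,\eta\rangle$, while
\[
\ddot f(t)=\langle\eta(t),R(J(t),\dot\gamma(t))\dot\gamma(t)\rangle,
\]
which is bounded by $\delta\|\eta\|\,\|J(t)\|$ from the curvature hypothesis. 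Using Theorem~\ref{thm:rauch original} to estimate $\|J(t)\|\le(1+\tfrac{\delta}{2}t^2)\|\xi\|$ for $t\le r<2$, one gets $|\ddot f(t)|\le 2\delta\|\eta\|\,\|\xi\|$, and integrating twice yields
\[
|f(r)-\langle\xi,\eta\rangle|\le\delta r^2\|\eta\|\,\|\xi\|.
\]
Specializing $\eta=(d\exp_v)_a(\xi)-\tau_{v,v'}\xi$ (or merely ranging $\eta$ over a unit sphere) gives $\|(d\exp_v)_a(\xi)-\tau_{v,v'}\xi\|\le\delta r^2\|\xi\|$, which is the first claim.

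For the bi-Lipschitz conclusion I would observe that $\tau_{v,v'}$ is a linear isometry, so the previous bound says that $(d\exp_v)_a$ differs from an isometry in operator norm by at most $\delta r^2$. Consequently every singular value of $(d\exp_v)_a$ lies in $[1-\delta r^2,1+\delta r^2]$, so the differential is $(1+\delta r^2)$-bi-Lipschitz at every point of the ball $B_r(v)$ of radius $r<2$; since $\exp_v$ is a diffeomorphism on this ball (which is well inside the injectivity radius $\delta^{-1}\gg 2$), integrating the pointwise distortion along radial segments yields the stated bi-Lipschitz bound on $\exp_v$ itself.

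The analogous statements for $\log_w$ and $\tau_{x,w}$ follow at once by applying the proved bound to $\exp_w$ at $a=\log_w x$ and inverting, using that the inverse of a map which is $(1+\delta r^2)$-bi-Lipschitz is again $(1+\delta r^2)$-bi-Lipschitz (with the same distortion up to an $O(\delta^2 r^4)$ correction that is harmless for $\delta\ll 1$). The only delicate point is keeping the curvature and domain assumptions consistent, i.e.\ checking that all geodesics and Jacobi fields involved stay inside the region where Theorem~\ref{thm:rauch original} applies; this is automatic because $r<2$ while the injectivity radius is $\delta^{-1}$, so no auxiliary lemma is needed and the main ``obstacle'' is really just the bookkeeping in the integration argument for $\ddot f$.
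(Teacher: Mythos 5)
Your proposal is correct and follows essentially the same route as the paper: the paper's proof of this lemma is precisely the Jacobi-field computation with the parallel test field $\eta(t)$ and $f(t)=\langle\eta(t),J(t)\rangle$ carried out in the paragraph preceding the lemma, and you reproduce it faithfully, including the bound on $\ddot f$ via Theorem~\ref{thm:rauch original} and the double integration. Your added details on extracting the bi-Lipschitz bound and on transferring the statement to $\log_w$ by inversion are exactly the steps the paper leaves implicit, and they are handled correctly.
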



\begin{Lem}\label{lem:karh}
Let $x,y,z\in B_2(v).$  Then
\begin{equation}\label{eq:karh}
\|\log_zy-\log_zx- \tau_{x,z}\log_xy\|<C\,\delta
\end{equation}
\end{Lem}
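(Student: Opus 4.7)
The goal is to establish a quantitative version of the Euclidean identity $(\log_zy-\log_zx)-\log_xy=0$, with an $O(\delta)$ defect reflecting the curvature bound. The plan is to reduce the claim to a pointwise derivative estimate along the geodesic from $x$ to $y$, and then combine Lemma~\ref{lem:Rauch} with a holonomy bound for geodesic triangles.

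Set $a=\log_xy$, so that $\|a\|=\dist(x,y)\le 4$, and let $\sigma(t)=\exp_x(ta)$, $t\in[0,1]$, be the unique minimizing geodesic from $x$ to $y$. Define $u\colon[0,1]\to T_zV$ by $u(t)=\log_z\sigma(t)$, so that $u(0)=\log_zx$ and $u(1)=\log_zy$. Then
$$\log_zy-\log_zx-\tau_{x,z}(a)=\int_0^1\bigl(u'(t)-\tau_{x,z}(a)\bigr)\,dt,$$
and it suffices to show $\|u'(t)-\tau_{x,z}(a)\|\le C\delta$ uniformly in $t$. By the chain rule, $u'(t)=(d\log_z)_{\sigma(t)}\dot\sigma(t)$; since $\sigma$ is a geodesic, $\dot\sigma(t)=\tau_{x,\sigma(t)}(a)$ (the velocity is parallel along $\sigma$, and $\sigma|_{[0,t]}$ is itself the unique minimizing geodesic from $x$ to $\sigma(t)$). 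Applying Lemma~\ref{lem:Rauch} to $\log_z$ gives $(d\log_z)_{\sigma(t)}=\tau_{\sigma(t),z}+O(\delta)$, whence
$$u'(t)=\tau_{\sigma(t),z}\circ\tau_{x,\sigma(t)}(a)+O(\delta)\|a\|.$$

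What remains is to compare the two-leg parallel transport $\tau_{\sigma(t),z}\circ\tau_{x,\sigma(t)}$ with the direct $\tau_{x,z}$; this is the main obstacle. Their difference is precisely the holonomy around the closed geodesic triangle with vertices $x$, $\sigma(t)$, $z$, all of which lie in $B_2(v)$, so the sides have length at most $4$ and any geodesic filling has area $O(1)$. Combined with $|K_\sigma|\le\delta$, the standard curvature-area bound on holonomy gives $\|\tau_{\sigma(t),z}\circ\tau_{x,\sigma(t)}-\tau_{x,z}\|\le C\delta$. One way to justify this bound inside the paper's framework is to express both parallel transports in the exponential chart at $z$ and apply Theorem~\ref{thm:rauch original} to a geodesic ruling of the triangle: in normal coordinates at $z$ the Christoffel symbols are of order $\delta$ throughout $B_2(v)$, so the ODE defining parallel transport is an $O(\delta)$ perturbation of the Euclidean (path-independent) one, and the discrepancy around the bounded triangle is $O(\delta)$. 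Substituting this estimate and integrating yields $\|\log_zy-\log_zx-\tau_{x,z}(\log_xy)\|\le C\delta$, as required.
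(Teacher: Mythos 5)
Your argument is correct, but it is a genuinely different route from the one in the paper. The paper does not argue from scratch: it quotes Karcher's inequality (C2.3) (with $a=\log_zx$, $v=\log_zy-\log_zx$, $p=z$), which gives $\dist\bigl(y,\exp_x(\log_zy-\log_zx)\bigr)\le C\delta$ directly, and then transfers this pointwise statement to the tangent space via the small distortion of $\log_x$ from Lemma~\ref{lem:Rauch}. You instead make the estimate self-contained: writing $u(t)=\log_z\sigma(t)$ along the geodesic $\sigma$ from $x$ to $y$, using $u'(t)=(d\log_z)_{\sigma(t)}\dot\sigma(t)$, replacing $d\log_z$ by parallel transport via Lemma~\ref{lem:Rauch}, and absorbing the remaining discrepancy into the holonomy of the geodesic triangle $x,\sigma(t),z$, which is $O(\delta)$ because the sides are bounded by $4$ and $|K_\sigma|\le\delta$. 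What your approach buys is independence from the external reference (the whole content of Karcher's (C2.3) is replaced by the elementary integral identity plus a holonomy bound); what it costs is that the triangle--holonomy estimate is not among the paper's stated tools, so it must be supplied. That fact is standard and is, in effect, also used without proof in the paper (in the proof of Lemma~\ref{lem:phi_1=phi_2} for a small triangle), so your reliance on it is consistent with the paper's level of detail; however, your second justification of it is slightly optimistic: the $O(\delta)$ bound on Christoffel symbols in normal coordinates needs control of derivatives of the metric (equivalently of $\dot J$), which does not literally follow from Theorem~\ref{thm:rauch original} as stated, since that theorem bounds only $|J(t)|$. The cleaner justification is the one you mention first: fill the triangle by the ruled homotopy of geodesics from $z$, differentiate parallel transport in the homotopy parameter, and bound the result by $\sup|R|$ times the (bounded) area, which uses only $|K_\sigma|\le\delta$. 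Two minor points of bookkeeping: distances between points of $B_2(v)$ can be as large as $4$, while Lemma~\ref{lem:Rauch} is stated for $r<2$, so you should note that the same estimate (with a larger constant) holds on that scale -- an issue the paper's own proof shares -- and the uniqueness of the minimizing geodesics you use is guaranteed by the assumption $r_{in}\ge\delta^{-1}$.
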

\begin{proof}

This lemma is a simple consequence of one Karcher's estimate
(\cite{Karcher}, inequality  (C2.3) on the page 540).

Indeed, substituting  $a=\log_zx,$
$v=\log_zy-\log_zx,$ $p=z$ to formula  (C2.3)\cite{Karcher}, and taking into account
that section curvatures $K_{\sigma}<\delta$, one gets
 $$
 d(y, \exp_x(\log_zy-\log_zx)\leq C\delta.
 $$
 Since logarithmic mapping has very small distortion (Lemma \ref{lem:Rauch}), we
 obtain desired inequality by replacing points to their
$\log_x$-images in the last inequality .

\end{proof}

\section{Local maps $\varphi_i(x)$}

\subsection{Construction of $\phi_i$}\label{ssec:construction phi}
Let $\{v_i\}$ be a $\varepsilon$-separated $\varepsilon$-net in
$V$.
By definition of the Gromov--Hausdorff
metric, there exists a
mapping $\chi:\{v_i\}\to W$ such that $U_\delta(Im\ \chi)=W$ and
\begin{equation}\label{eq:chi property}
|\dist_V(v_i,v_j)-\dist_W(\chi(v_i),\chi(v_j))|<\delta
\end{equation}
for all $i,j$.
We will call such mappings by $\varepsilon$-approximations.

The construction of mappings $\varphi_i$ is the same for all $i$, so
we choose some $v=v_i$ and drop the index $i$ till the end of the
subsection.

\begin{enumerate}
\item Choose $\mathfrak{B}=\{b_1,b_2,\dots,b_n\}\subset T_{v}V$ be
    some orthonormal basis of the Euclidean space $T_{v}V$.
\item Pick $e_k\in\{v_j\}$ such that $\dist(e_k,
    \exp_{v}b_k)<\e$ for $k=1,\dots,n$. Denote by $\mathfrak{E}$
    the basis $\{\log_v e_k\}$ of $T_vV$, and let $\mathfrak{F}$
    be the basis $\{\log_w f_k\}$ of $T_wW$, where
    $w=\chi(v),f_k=\chi(e_k)\in W$
\item Denote by $L$ the linear mapping $T_{v}V\rightarrow T_{w}W$ such that
    \begin{equation}\label{eq:def L}
    L(\log_{v}(e_k))=\log_{w}(f_k),    \quad k=1,\dots,n.
    \end{equation}
    In the other words, $L$ is the linear extension of the
restriction to the basis $\mathfrak{E}$ of the mapping
$\log_w\circ\chi\circ\exp_v:T_vV\to T_wW$ (the latter is defined
on a discreet set of points only).
\item We define mapping $\varphi$ as:
    \begin{equation}\label{eq:def of varphi}
    \varphi=(\exp_{w}\circ L\circ\log_{v})|_{B_{4\e}(v)},
    \end{equation}
    where $B_{4\e}(v)\subset V$ is a  ball of radius $4\e$ with
    center in $v$.
\end{enumerate}
Evidently, $L=d_v\varphi$. Also, $\varphi(e_k)=f_k$ for $k=1,\dots,n$.

\begin{Lem}\label{lem:phi=chi on vi}
The base $\mathfrak{E}$ and $\mathfrak{F}$ are $C\e$-orthonormal. The
mapping $L:T_{v}V\rightarrow T_{w}W$ is $C\delta$-close to an isometry.

Let $e'\in \{v_k\}\subset V$ be a point in the net $\{v_i\}$,
$\dist(e',v)<2$, and let $f'=\chi(e')$. Then
$\|L(\log_ve')-\log_wf'\|\le C\delta$.
\end{Lem}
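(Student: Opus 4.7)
The plan is to verify the three claims in order, exploiting the near-Euclidean geometry of small balls (Lemma~\ref{lem:Rauch}), the distance-preservation property~(\ref{eq:chi property}) of $\chi$, and the fact that radial $\log$-lengths equal geodesic distances exactly. Throughout, recall $\delta=\e^2\ll\e$.

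\emph{First, $C\e$-orthonormality of $\mathfrak{E}$ and $\mathfrak{F}$.} I apply the bi-Lipschitz estimate of Lemma~\ref{lem:Rauch} to $\dist(e_k,\exp_v b_k)<\e$ to obtain $\|\log_v e_k - b_k\|\le 2\e$, and then expand $\langle\log_v e_i,\log_v e_j\rangle$ around $\langle b_i,b_j\rangle=\delta_{ij}$ to get the claim for $\mathfrak{E}$. For $\mathfrak{F}=\{\log_w f_k\}$, I use $\|\log_w f_k\|=\dist_W(w,f_k)$, which by~(\ref{eq:chi property}) is within $\delta$ of $\|\log_v e_k\|$, and $\|\log_w f_i-\log_w f_j\|$, which by two applications of Lemma~\ref{lem:Rauch} and one of~(\ref{eq:chi property}) is within $C\delta$ of $\|\log_v e_i-\log_v e_j\|$; polarizing gives the $C\e$ bound for $\mathfrak{F}$ as well.

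\emph{Second, $L$ is $C\delta$-close to an isometry.} I invoke the second part of Lemma~\ref{lem:e-orthonormal} with the $C\e$-orthonormal basis $\mathfrak{E}$. The required estimate $|\langle\log_w f_i,\log_w f_j\rangle - \langle\log_v e_i,\log_v e_j\rangle|<C\delta$ is exactly the polarization computation from the previous step, but performed between the two bases rather than against the orthonormal $b$-basis, so the $C\e$ errors (distance of $\log_v e_k$ from $b_k$) never enter; only the Rauch and $\chi$ errors, both of order $\delta$, contribute.

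\emph{Third, extension to $e'$.} Set $\xi=\log_v e'$, $\eta=\log_w f'$, $\zeta=L\xi$. The same polarization argument applied to the triples $(v,e',e_k)$ and $(w,f',f_k)$ yields $|\langle\xi,\log_v e_k\rangle-\langle\eta,\log_w f_k\rangle|\le C\delta$ for each $k$. Since $L$ is $C\delta$-close to an isometry by the previous step and all vectors have norm at most $2$, $\langle\zeta,\log_w f_k\rangle=\langle L\xi,L(\log_v e_k)\rangle=\langle\xi,\log_v e_k\rangle+O(\delta)$, so $|\langle\eta-\zeta,\log_w f_k\rangle|\le C\delta$ for every $k$. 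Writing $\eta-\zeta=\sum\alpha_k\log_w f_k$ in the basis $\mathfrak{F}$, its $C\e$-orthonormality forces $\|\eta-\zeta\|^2\ge\tfrac12\sum\alpha_k^2$ (as in the proof of Lemma~\ref{lem:e-orthonormal}), while $\|\eta-\zeta\|^2=\sum\alpha_k\langle\eta-\zeta,\log_w f_k\rangle\le C\delta\sqrt{n}\bigl(\sum\alpha_k^2\bigr)^{1/2}\le C\delta\|\eta-\zeta\|$, yielding $\|\eta-\zeta\|\le C\delta$. This third step is the main obstacle, asserting that $L$, pinned on only $n$ basis points, matches $\log_w\circ\chi\circ\exp_v$ to precision $C\delta$ at every other nearby net point; the resolution requires both the near-isometry of $L$ and the near-orthonormality of $\mathfrak{F}$, which explains why the first part of the lemma is proved even though the second is sharper.
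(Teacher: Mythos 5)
Your proposal is correct and follows essentially the same route as the paper: both derive $|\langle\log_ve_k,\log_ve_l\rangle-\langle\log_wf_k,\log_wf_l\rangle|<C\delta$ and the analogous estimate for $e',f'$ from the distance property \eqref{eq:chi property} of $\chi$, the small distortion of $\log$ (Lemma~\ref{lem:Rauch}), and polarization \eqref{eq:cosine thm}. The only difference is cosmetic: you finish the second and third claims via Lemma~\ref{lem:e-orthonormal} and a dual-basis norm estimate in the $C\e$-orthonormal basis $\mathfrak{F}$, whereas the paper compares the inverse Gram matrices $G_{\mathfrak{E}}^{-1},G_{\mathfrak{F}}^{-1}$ to match decomposition coefficients.
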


\begin{proof}
The norms of  the vectors $\log_ve_k-b_k$ are less than $2\e$ by the
choice of $e_k$ and Lemma~\ref{lem:Rauch}. Therefore $\mathfrak{E}$
is $C\e$-orthonormal basis.

 \mitrii{I think the "both $L\circ \log_v$ and" should be
removed here: this is what the lemma proves after all}

The pairwise distances between $v, e_k, e'$ change by at most
$C\delta$ by $\log_w\circ \chi$, due to Lemma~\ref{lem:Rauch} and the
main property \eqref{eq:chi property} of $\chi$. This implies that
the scalar products $\langle\log_ve_k,\log_ve_l\rangle$ differ from
the scalar products $\langle\log_wf_k, \log_wf_l\rangle$ by at most
$C\delta$,
\begin{equation}\label{eq:basis scalar products}
|\langle\log_ve_k,\log_ve_l\rangle-\langle\log_wf_k,
\log_wf_l\rangle|<C\delta,
\end{equation}
due to the cosine theorem
\begin{equation}\label{eq:cosine thm}
2\langle a,b\rangle=\|a\|^2+\|b\|^2-\|a-b\|^2.
\end{equation}
By the same reasons,
\begin{equation}\label{eq:vector scalar products}
|\langle\log_ve',\log_ve_l\rangle-\langle\log_wf', \log_wf_l\rangle|<C\delta.
\end{equation}

The \eqref{eq:basis scalar products} means that the difference of the
Gram matrices $G_{\mathfrak{E}},G_{\mathfrak{F}}$ of the base
$\mathfrak{E}$ and $\mathfrak{F}$ correspondingly, is
$C\delta$-small,
$$\|G_{\mathfrak{E}}-G_{\mathfrak{E}}\|<C\delta,$$
and, by $C\e$-orthogonality of $\mathfrak{E}$, this implies that both matrices
are $C\e$-close to the identity matrix. This implies that the basis
$\mathfrak{F}$ is a $C\e$-orthonormal basis.

Also, it implies that their inverses
$G_{\mathfrak{E}}^{-1},G_{\mathfrak{F}}^{-1}$ are $C\delta$-close as
well, so $L$ is $C\delta$-close to an isometry.

Together with \eqref{eq:vector scalar products}, this implies
that the coefficients for decomposition of vectors $\log_ve',\log_wf'$
in bases $\mathfrak{E}$ and $\mathfrak{F}$ correspondingly are
$C\delta$-close, since one can restore these coefficients from the
tuples  $\{(\log_ve',\log_ve_l)\}$ and $\{(\log_wf', \log_wf_l)\}$
using $G_{\mathfrak{E}}^{-1},G_{\mathfrak{F}}^{-1}$ correspondingly.
Since $L\mathfrak{E}=\mathfrak{F}$ by definition, the coefficients of
decomposition of $L(\log_ve')$ and $\log_wf'$ in the basis
$\mathfrak{F}$ are $C\delta$-close, which proves the Lemma.
\end{proof}

\begin{Cor}\label{cor:phi are Lipshcitz}
There is some constant $C>0$ which depends on $n$ only, such that
for each $i$, $\varphi_i$ is a $(1+C\delta)$-bi-Lipschitz map and
its differential is $C\delta$-close to isometry.
\end{Cor}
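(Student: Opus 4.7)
The plan is to compute the differential of $\varphi_i$ pointwise, show it is $C\delta$-close to a linear isometry of tangent spaces, and then deduce the bi-Lipschitz bound on the geodesically convex ball $B_{4\e}(v_i)$ by integrating along geodesics. As in the construction I drop the index $i$.

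By the chain rule applied to $\varphi = \exp_w \circ L \circ \log_v$, at any $x \in B_{4\e}(v)$,
\[
d_x\varphi \;=\; (d_{a'}\exp_w) \circ L \circ (d_x \log_v), \qquad a' := L(\log_v x) \in T_w W.
\]
I would show that each of the three factors is $C\delta$-close to a linear isometry. The middle factor $L$ is $C\delta$-close to an isometry $Q\colon T_v V \to T_w W$ by Lemma~\ref{lem:phi=chi on vi}. For the outer factors I would invoke Lemma~\ref{lem:Rauch}: since $\dist(x,v) \le 4\e$, the map $d_x \log_v$ is $16\delta^2$-close to the parallel transport $\tau_{x,v}$. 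Also $\|a'\| \le \|L\|\cdot\|\log_v x\| \le (1+C\delta)\cdot 4\e$, comfortably below the injectivity radius $\delta^{-1}$ of $W$, so the same lemma gives that $d_{a'}\exp_w$ is $C\delta^2$-close to $\tau_{w,\varphi(x)}$. Composing three maps each within $C\delta$ of a linear isometry yields a map $C\delta$-close to the honest isometry $\tau_{w,\varphi(x)} \circ Q \circ \tau_{x,v}\colon T_x V \to T_{\varphi(x)} W$, proving the differential statement.

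For the bi-Lipschitz conclusion, the pointwise estimate above gives both $\|d_x\varphi\| \le 1+C\delta$ and $\|(d_x\varphi)^{-1}\| \le 1+C\delta$ throughout $B_{4\e}(v)$. Since $B_{4\e}(v)$ is geodesically convex (as $4\e$ is far below both $\delta^{-1}$ and $\pi/(2\sqrt\delta)$), comparing the length of a minimising geodesic joining two points of $B_{4\e}(v)$ with the length of its $\varphi$-image in $W$ immediately delivers the $(1+C\delta)$-bi-Lipschitz bound in both directions.

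I do not foresee any substantive obstacle. The whole argument is a chain-rule bookkeeping once Lemmas~\ref{lem:Rauch} and~\ref{lem:phi=chi on vi} are in hand. The only items requiring care are: keeping the constants $C$ dimension-dependent only while combining them additively through the composition; verifying that both $\dist(x,v)$ and $\|a'\|$ remain well within the injectivity radii so Lemma~\ref{lem:Rauch} applies on both sides; and confirming geodesic convexity of $B_{4\e}(v)$. All three are immediate consequences of $\e^2 = \delta$ with $\delta \ll 1$ together with the assumption $r_{in} \ge \delta^{-1}$.
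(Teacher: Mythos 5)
Your proposal is correct and follows the paper's own route: the paper derives the corollary directly from Lemma~\ref{lem:phi=chi on vi} (closeness of $L$ to an isometry) together with Lemma~\ref{lem:Rauch} (small distortion of $\exp_w$ and $\log_v$), exactly the chain-rule decomposition you spell out. Your write-up merely makes explicit the composition estimate and the integration along geodesics that the paper leaves as ``follows immediately.''
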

This follows immediately from the previous Lemma and
Lemma~\ref{lem:Rauch}.

\subsubsection{Maps $\{\phi_i\}$ are $C^1$-close one to another.}
Let $v_1,v_2\in\{v_i\}$ be two points of the $\e$-net on $V$, and
assume that $\dist(v_1,v_2)<4\e$. Here we prove that the mappings
$\phi_1$ and $\phi_2$ are $C\delta$-close in $C^1$-sense in
$B_{2\e}v_1\cap B_{2\e}v_2$.

\begin{Lem}\label{lem:phi_1=phi_2}
For every $x\in B_{4\varepsilon}(v_1)\bigcap B_{4\varepsilon}(v_2)$
\begin{equation}
\dist(\varphi_1(x),\varphi_2(x))<C\delta.
\end{equation}

Moreover, the parallel translation
$\tau_{\phi_1(x),\phi_2(x)}\circ d_x\phi_1$ of  $d_x\phi_1$  is
$C\delta$-close to $d_x\phi_2$.
\end{Lem}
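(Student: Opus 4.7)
The plan is to reduce both assertions to a single ``transport compatibility'' estimate that relates the two local linear maps $L_1,L_2$ via parallel translation:
\[
\|L_1\circ\tau_{v_2,v_1}-\tau_{w_2,w_1}\circ L_2\|<C\delta,
\]
viewed as operators $T_{v_2}V\to T_{w_1}W$. To prove this I would evaluate both sides on the $C\e$-orthonormal basis $\mathfrak{E}^{(2)}=\{\log_{v_2}e_k^{(2)}\}$ of $T_{v_2}V$. By Lemma~\ref{lem:karh}, $\tau_{v_2,v_1}\log_{v_2}e_k^{(2)}\approx\log_{v_1}e_k^{(2)}-\log_{v_1}v_2$; by Lemma~\ref{lem:phi=chi on vi} applied to $v_1$ with $e'=e_k^{(2)}$ and $e'=v_2$, $L_1\log_{v_1}e_k^{(2)}\approx\log_{w_1}f_k^{(2)}$ and $L_1\log_{v_1}v_2\approx\log_{w_1}w_2$; and finally, by Lemma~\ref{lem:karh} in $W$, $\log_{w_1}f_k^{(2)}-\log_{w_1}w_2\approx\tau_{w_2,w_1}\log_{w_2}f_k^{(2)}=\tau_{w_2,w_1}L_2\log_{v_2}e_k^{(2)}$. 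Each ``$\approx$'' is an equality modulo $C\delta$. Then the first part of Lemma~\ref{lem:e-orthonormal} upgrades the basiswise estimate to an operator norm bound.

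For the distance estimate, write $y_i=\phi_i(x)$ and pass everything back to $T_{w_1}W$ via $\log_{w_1}$. By definition $\log_{w_1}y_1=L_1\log_{v_1}x$. Applying Lemma~\ref{lem:karh} with $z=w_1,\ x=w_2,\ y=y_2$ gives $\log_{w_1}y_2\approx\log_{w_1}w_2+\tau_{w_2,w_1}L_2\log_{v_2}x$, and substituting $\log_{w_1}w_2\approx L_1\log_{v_1}v_2$ (Lemma~\ref{lem:phi=chi on vi}) and $\log_{v_1}x\approx\log_{v_1}v_2+\tau_{v_2,v_1}\log_{v_2}x$ (Lemma~\ref{lem:karh} in $V$) produces
\[
\log_{w_1}y_1-\log_{w_1}y_2\;\approx\;(L_1\tau_{v_2,v_1}-\tau_{w_2,w_1}L_2)\log_{v_2}x,
\]
up to an error $C\delta$. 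The main term is bounded by the transport compatibility estimate times $\|\log_{v_2}x\|\le 4\e$, hence is $\le C\e\delta\ll C\delta$. Since $\exp_{w_1}$ is $(1+C\delta)$-bi-Lipschitz on the relevant ball (Lemma~\ref{lem:Rauch}), this gives $\dist(y_1,y_2)<C\delta$.

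For the $C^1$ estimate, use Lemma~\ref{lem:Rauch} to replace $(d\exp_{w_i})$ and $(d\log_{v_i})$ by parallel translations up to error $O(\e^2\delta)=O(\delta^2)$, so that
\[
d_x\phi_i\;\approx\;\tau_{w_i,y_i}\circ L_i\circ\tau_{x,v_i}.
\]
Plugging in the transport compatibility relation $L_2\approx\tau_{w_1,w_2}L_1\tau_{v_2,v_1}$ and comparing the two sides of $\tau_{y_1,y_2}d_x\phi_1\approx d_x\phi_2$, one is left with discrepancies of the form $\tau_{v_2,v_1}\tau_{x,v_2}-\tau_{x,v_1}$ in $V$ and $\tau_{y_1,y_2}\tau_{w_1,y_1}-\tau_{w_2,y_2}\tau_{w_1,w_2}$ in $W$. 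Both are holonomies around geodesic ``quadrilaterals'' of diameter $O(\e)$ and area $O(\e^2)=O(\delta)$, so with $|K|\le\delta$ they contribute $O(\delta^2)$; this can be extracted directly from Lemma~\ref{lem:karh} by evaluating on a convenient $C\e$-orthonormal basis of $T_xV$ (e.g.\ $\{\tau_{v_1,x}\log_{v_1}e_k^{(1)}\}$) and applying Lemma~\ref{lem:e-orthonormal}.

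I expect the main obstacle to be the differential estimate, specifically the bookkeeping of how the two compositions of parallel transports in $W$ (one through $y_1$, one through $w_2$) differ, since this is where the path-dependence of parallel translation enters. The key point making it work is the numerology $\e^2=\delta$: curvature $\le\delta$ times the area $\le C\e^2$ of any of the relevant small triangles is $O(\delta^2)$, comfortably better than the target $C\delta$, so all the holonomy errors are absorbed.
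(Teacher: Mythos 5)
Your proof is correct, and it pivots on the same estimate that the paper's proof isolates, namely $\|\tau_{w_1,w_2}\circ L_1\circ\tau_{v_2,v_1}-L_2\|<C\delta$, but you reach it and exploit it differently. The paper first proves the implication ``distance estimate $\Rightarrow$ differential estimate'' by testing $d_x\phi_1$ and $d_x\phi_2$ on the basis $\{\log_x e_i\}$ and showing, via Lemma~\ref{lem:karh}, that both send $\log_xe_i$ to within $C\delta$ of $\log_{\phi_j(x)}f_i$; it then obtains the compatibility estimate indirectly, from $\dist(\phi_1(v_2),w_2)<C\delta$, the just-proved implication applied at $x=v_2$ (where $d_{v_2}\phi_2=L_2$), the estimate \eqref{eq:tLt=dphi}, and the smallness of the holonomy around the triangle $w_1w_2\phi_1(v_2)$; finally the distance estimate follows by applying the compatibility to $\log_{v_2}x$. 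You instead derive the compatibility estimate directly, by evaluating on the basis $\{\log_{v_2}e^{(2)}_k\}$ and combining Lemma~\ref{lem:karh}, the last part of Lemma~\ref{lem:phi=chi on vi} and Lemma~\ref{lem:e-orthonormal} --- arguably cleaner, since it avoids the detour through $d_{v_2}\phi_1$ --- and your derivation of the distance estimate then matches the paper's. The genuine divergence is in the differential estimate: the paper's pinned-vector argument sidesteps holonomy bookkeeping entirely, whereas you must control the path dependence of the transports ($\tau_{v_2,v_1}\tau_{x,v_2}$ versus $\tau_{x,v_1}$ in $V$, and the broken paths through $y_1$ and through $w_2$ in $W$). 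Your curvature-times-area $O(\delta^2)$ bound for these holonomies is true but is not backed by any lemma in the paper; however the weaker $C\delta$ bound, which suffices, does follow from Lemma~\ref{lem:karh} by the basis-evaluation trick you indicate (and the paper itself asserts the analogous triangle statement without proof), so this is not a gap, only the price of your route: the paper's treatment of the differential claim is shorter, while yours makes the transport bookkeeping explicit and keeps the whole lemma hanging on the single compatibility inequality.
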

\begin{proof}
By Lemma~\ref{lem:Rauch}:
\begin{equation}\label{eq:tLt=dphi}
\|\tau_{w_1,x}\circ L_1\circ \tau_{x,v_1}-d\phi(x)\|<C\delta
\end{equation}

\mitrii{I corrected the sign and the indices in the following
equation}
The vector $L_1(\tau_{x,v_1}\log_xe_i)$ is $C\delta$-close
to
$$
L_1(\log_{v_1}x-\log_{v_1}e_i)=\log_{w_1}\phi_1(x)-\log_{w_1}f_i,
$$
by Lemma~\ref{lem:karh}. The parallel translation of the right
part of the last equation to the point $\phi_1(x)$ by
$\tau_{w_1,\phi_1(x)}$ is $C\delta$-close to \mitrii{the same
here} $\log_{\phi_1(x)}{f_i}$, again by the same
Lemma~\ref{lem:karh}. Summing up, we get
$$
\|d_x\phi_1(\log_xe_i)-\log_{\phi_1(x)}f_i\|<C\delta,
$$
and similarly for $d_x\phi_2$.

Therefore by Lemma~\ref{lem:karh} the vectors
$$
\tau_{\phi_1(x),\phi_2(x)}\circ
d_x\phi_1(\log_xe_i)-d_x\phi_2(\log_xe_i), \quad i=1,\dots,n,
$$
are $C\delta$-close to $\log_{\phi_1(x)}\phi_2(x)$; i.e., to zero, if
the $\dist(\phi_1(x),\phi_2(x))<C\delta$. Since $\{\log_xe_i\}$ is a
$C\e$-orthonormal basis of $T_xV$, this means that the first claim of
the Lemma implies the second.

Now,  by Lemma~\ref{lem:phi=chi on vi},
\begin{equation}\label{eq:w2=phi(v2)}
\dist(\phi_1(v_2),\phi_2(v_2))<C\delta.
\end{equation}
Therefore, $ \|\tau_{\phi_1(v_2),w_2}\circ
d_{v_2}\phi_1-L_2\|<C\delta.$

The parallel translation along a geodesic triangle $\triangle
w_1w_2\phi_1(v_2)$ is  $C\delta$-close to identity, so, using
\eqref{eq:tLt=dphi}, we conclude that
$$
\|\tau_{w_1,w_2}\circ L_1\circ \tau_{v_2,v_1}-L_2\|<C\delta.
$$
Applying this to the vector $\log_{v_2}x\in T_{v_2}V$ and using
Lemma~\ref{lem:karh} and \eqref{eq:w2=phi(v2)} we get
$$
\|\log_{w_2}\phi_2(x)-\log_{w_2}\phi_1(x)\|<C\delta,
$$
which, by Lemma~\ref{lem:Rauch}, proves the first statement of the
Lemma as well.
\end{proof}

\section{Gluing together local mappings}\label{sec:Phi}

Here we glue the mappings $\varphi_i$ into one mapping $h:V\to W$
using the center of mass construction of Karcher.

Suppose  that a Riemannian manifold $V$ is covered by balls $B_i$ of
radius $2\varepsilon$, and that for every $i$ there exists a mapping
$\varphi_i:B_i\to W$ which is $\delta$-close to isometry. Assume
that the images of $\varphi_i$ cover $W$ and are $\delta$-close in
$C^1$ sense on intersections of their domains. We prove
that there exists a bi-Lipschitz mapping between $V$ and $W$ which is $C^1$
close to $\varphi_i$ on $B_i$.

\subsubsection{Partition of unity: construction and
estimates}\label{sec:construction of psi} \mitrii{I do not know what
to do with this section. I cannot find a reference}

 The
partition of unity $\{\psi_i(x)\}$ is constructed in a standard way.
We  need some estimates on the norms of their differentials, so we
repeat this classical construction here.

Let
$\Psi(r)$  be any non-negative monotonic and $C^{\infty}-$smooth function such that
it is equal to $1$ for $r\leq 1$ and $0$ for $r\geq 2$. Consider
functions
$\tilde\psi_i(x)=\Psi(\varepsilon^{-1}\left|xv_i\right|)$.
We define
$$\psi_i=\frac{\tilde\psi_i}{\sum\limits_{k}\tilde\psi_k}$$
It is easy to estimate the differential of $\psi_i$:

\begin{Lem}\label{lem:estim_deriv_psi}
For every $x\in V$
\begin{enumerate}
\item at least one of the numbers $\tilde\psi_i(x)$ is equal to 1,
\item at most $7^n$ of $\psi_i(x)$ are different from zero
    assuming that $\delta$ is sufficiently small,
\item $\parallel d_x\psi_i\parallel \le C\varepsilon^{-1}$ with some
    absolute constant $C$.
\end{enumerate}
\end{Lem}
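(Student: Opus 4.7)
The plan is to verify the three claims in order, each making use of the previous. Claim (1) is immediate from the $\varepsilon$-net property: any $x\in V$ has some net point $v_i$ with $\dist(x,v_i)\le\varepsilon$, so $\varepsilon^{-1}|xv_i|\le1$ and hence $\tilde\psi_i(x)=\Psi(\varepsilon^{-1}|xv_i|)=1$ by the definition of $\Psi$.

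For (2) the key input is the $\varepsilon$-separation of the net. Any index $i$ with $\psi_i(x)\neq 0$ has $\tilde\psi_i(x)\neq 0$, hence $\dist(x,v_i)<2\varepsilon$; then the balls $B_{\varepsilon/2}(v_i)$ are pairwise disjoint and all contained in $B_{5\varepsilon/2}(x)$. I would bound the number of such $i$ by volume comparison: since $|K|\le\delta\ll1$ and $2\varepsilon$ lies far below the injectivity radius (which is of order $\delta^{-1}$), Bishop--Gromov (or equivalently a direct Rauch-style argument using Theorem~\ref{thm:rauch original}) gives $\mathrm{vol}(B_{5\varepsilon/2}(x))\le(1+C\delta)\omega_n(5\varepsilon/2)^n$ and $\mathrm{vol}(B_{\varepsilon/2}(v_i))\ge(1-C\delta)\omega_n(\varepsilon/2)^n$ for every $i$. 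The ratio is at most $5^n(1+C\delta)$, which is strictly less than $7^n$ once $\delta$ is taken small enough.

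For (3) I would apply the chain rule to $\tilde\psi_i(x)=\Psi(\varepsilon^{-1}\dist(x,v_i))$. On $B_{2\varepsilon}(v_i)$ the distance function $\dist(\cdot,v_i)$ is smooth with unit gradient (as $2\varepsilon$ is far below the injectivity radius), so $\|d_x\tilde\psi_i\|\le\|\Psi'\|_\infty\varepsilon^{-1}=C\varepsilon^{-1}$. Writing $\sigma(x):=\sum_k\tilde\psi_k(x)$, the quotient rule yields
\[
d_x\psi_i=\sigma^{-1}\,d_x\tilde\psi_i-\sigma^{-2}\,\tilde\psi_i\sum_k d_x\tilde\psi_k.
\]
By (1) we have $\sigma\ge 1$, and by (2) the sum contains at most $7^n$ nonzero terms, each of norm $\le C\varepsilon^{-1}$; this gives $\|d_x\psi_i\|\le(1+7^n)C\varepsilon^{-1}$, which is the asserted bound. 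No step here is genuinely difficult: the only place where Riemannian geometry enters in a nontrivial way is the volume comparison used in (2), which is standard under the bounded-curvature hypothesis, and everything else is a routine chain-rule and quotient-rule computation.
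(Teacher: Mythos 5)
Your proposal is correct and follows essentially the same route as the paper: claim (1) from the net property, claim (2) by the standard packing argument (disjoint $\varepsilon/2$-balls around $\varepsilon$-separated centers inside $B_{5\varepsilon/2}(x)$, with volumes controlled via comparison in the space forms of curvature $\pm\delta$), and claim (3) by the quotient rule together with $\sum_k\tilde\psi_k\ge1$ and the bound on the number of nonzero terms. You merely spell out the packing step that the paper compresses into its one-line Bishop--Gromov ratio $\mathrm{Vol}_{-\delta}B_{5\varepsilon/2}/\mathrm{Vol}_{\delta}B_{\varepsilon/2}$; no substantive difference.
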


\begin{proof}

The first statement follows because $\{v_i\}$ is an $\e$-net.

The Bishop-Gromov upper estimate for the number of non-vanishing
$\psi_i(x)$ is $\mbox{Vol}_{-\delta}B_{5\e/2}/\mbox{Vol}_{\delta}B_{\e/2}$,
where $\mbox{Vol}_cB_r$ denotes the volume of a ball of radius $r$ in the
space form of curvature $c$. For $\delta\ll 1$
this ratio is smaller than $7^n$, and the second claim follows.

Let us estimate the differentials:
 \begin{equation}
  \left\|d\psi_i\right\|\le
  \left\|\frac{d\tilde{\psi}_i}{\sum\tilde\psi_k}\right\|+
 \left\| \frac{\tilde\psi_i\,\sum d\tilde{\psi}_i}
 {(\sum\tilde\psi_k)^2}\right\|\le
  \left\|d\tilde{\psi}_i\right\|+
  \left\|\tilde\psi_i\cdot\sum d\tilde{\psi}_i\right\|,
  \end{equation}
because $\sum\limits_k\tilde\psi_k\ge 1$.

But $\parallel  \d\tilde\psi_k\parallel \le C{\varepsilon}^{-1}$,
where $C$ is some absolute constant. So the estimate for
$\parallel d\psi_i\parallel $ follows
from the second claim of the Lemma.
\end{proof}
\subsection{Definition of $\Phi$}
Define the function $\Phi:\ V\times W\rightarrow\R$ as
$$
\Phi(x,y)=\frac{1}{2}\sum_{i}\psi_i(x)
\dist(\varphi_i(x),y)^2,
$$
This is a smooth function because if $\dist(\varphi_i(x),y)^2$ is big,
than the corresponding
coefficient is zero.
We define the mapping $h(x)$
by the condition
$$\Phi(x,h(x))=\min_{y\in W}\Phi(x,y).$$
In the other words, $h(x)$ is the center of mass for the points
$\varphi_i(x)$ with weights $\psi_i(x)$.

\begin{Lem}[Karcher \cite{Karcher}]\label{lem:h is well-defined}
The function $h(x)$ is well-defined.
\end{Lem}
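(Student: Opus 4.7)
The plan is to prove this by showing that $y\mapsto \Phi(x,y)$ is strictly convex on a small ball in $W$ that contains all the relevant points, and that the minimum cannot escape from this ball. This is the standard Karcher center-of-mass argument, specialized to our setting where curvatures and displacements are tiny.

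First I would localize the sum. For fixed $x\in V$, Lemma~\ref{lem:estim_deriv_psi} says that at most $7^n$ terms $\psi_i(x)$ are nonzero, and these correspond to $v_i$ with $\dist(x,v_i)<2\e$. By Corollary~\ref{cor:phi are Lipshcitz}, each $\varphi_i$ is $(1+C\delta)$-bi-Lipschitz on its domain, and by Lemma~\ref{lem:phi_1=phi_2} any two of these $\varphi_i(x)$ are $C\delta$-close. Consequently all active points $\varphi_i(x)$ lie in a single small ball $B_{r_0}(\varphi_{i_0}(x))\subset W$ with $r_0 = C\e$, which is far below the injectivity radius $\delta^{-1}$ and far below $\delta^{-1/2}$.

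Next I would invoke the standard Hessian estimate for the squared distance function: on any ball of radius $r$ with $\sqrt{\delta}\,r \ll 1$ and $r$ less than the injectivity radius, the function $y\mapsto \tfrac12\dist(z,y)^2$ is smooth and its Hessian satisfies
\[
\bigl(1-C\delta r^2\bigr)\,\Id \;\le\; \operatorname{Hess}_y \tfrac12\dist(z,y)^2 \;\le\; \bigl(1+C\delta r^2\bigr)\,\Id,
\]
by Rauch comparison (this is the same estimate that underlies Lemma~\ref{lem:Rauch}, applied to Jacobi fields of the exponential map based at $z$). I would apply this on a ball $B = B_R(\varphi_{i_0}(x))\subset W$ of some fixed radius $R$ (say $R=1$, much larger than $r_0$ but still much smaller than $\delta^{-1/2}$). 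Since $\psi_i\ge 0$ and $\sum_i\psi_i(x)=1$, the convex combination
\[
y\longmapsto \Phi(x,y)=\tfrac12\sum_i \psi_i(x)\,\dist(\varphi_i(x),y)^2
\]
inherits the same two-sided Hessian bound on $B$, and in particular $\operatorname{Hess}_y\Phi(x,y)\ge \tfrac12\,\Id$ there. Strict convexity on the geodesically convex ball $B$ yields at most one critical point in $B$, and this critical point, if it exists, is the unique minimum of $\Phi(x,\cdot)$ restricted to $B$.

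It remains to check that the global minimum is attained inside $B$ and not on its boundary. For this, observe that at the center $\varphi_{i_0}(x)$ of $B$ we have $\Phi(x,\varphi_{i_0}(x))\le \tfrac12(C\delta)^2 = O(\delta^2)$, while for any $y\in W$ with $\dist(\varphi_{i_0}(x),y)\ge R/2$ the triangle inequality gives $\dist(\varphi_i(x),y)\ge R/2 - C\delta$ for every active $i$, hence $\Phi(x,y)\ge \tfrac18 R^2$. So any minimizer satisfies $\dist(\varphi_{i_0}(x),y)<R/2$, which puts it well inside $B$. The gradient $\nabla_y\Phi(x,y)=-\sum_i\psi_i(x)\log_y\varphi_i(x)$ has norm $O(\e)$ at $\varphi_{i_0}(x)$ and the Hessian is $\ge \tfrac12\Id$, so a critical point exists (by the inverse function theorem, or a direct descent argument). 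Thus the minimum is unique and lies in the interior of $B$, which proves that $h(x)$ is well defined. The only real subtlety—and the place where I would be most careful—is the a priori bound locating the minimizer inside the convexity ball $B$; everything else is a quantitative reuse of Rauch comparison.
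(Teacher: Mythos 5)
Your proposal is correct and follows essentially the same route as the paper's own argument: localize the minimizer of $\Phi(x,\cdot)$ by comparing its value at one of the mutually $C\delta$-close points $\varphi_i(x)$ with its value far away, then get uniqueness from strict convexity of $\Phi(x,\cdot)$ via Rauch-type Hessian bounds on the squared distance (the paper does this on a ball of radius $O(\delta)$, you on a unit-scale ball, which is an immaterial difference). Your explicit treatment of existence of the minimizer is a welcome extra detail the paper leaves implicit.
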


The reason is that for a fixed
$x\in V$, the points $\varphi_i(x)$ corresponding to non-zero
$\psi_i(x)$ are $C\delta$-close one to another by
Lemma~\ref{lem:phi_1=phi_2}. Therefore $\Phi(x,y)\ge 4C^2\delta^2$
for $y$ outside a ball $B_{3C\delta}\phi_i(x)\subset W$ of radius
$3C\delta$ centered at one of $\phi_i(x)$, and is less that
$C^2\delta^2$ for this $\varphi_i(x)$. Therefore the minimum lies in
$B_{3C\delta}\phi_i(x)$.
On the other hand, one can show that $\Phi(x,\exp(\cdot))$ is a convex function
in $B_{4\delta}(0)\subseteq T_{\phi_i(x)}W$, similar to Corollary~\ref{isom}
below, so the minimum is unique.
In particular, we see that
\begin{equation}\label{eq:h(x) close to phi(x)}
\dist(h(x),\phi_i(x))<3C\delta.
\end{equation}

Since for each $x\in V$, the function $\Phi$ reaches minimum at $y=h(x)$, we have
\begin{equation}\label{minimum}
 d_*\Phi=0,
 \end{equation}
at all points $(x,h(x))$, where
$d_*$ means the restriction of $d\Phi$ to the subspace $\{0\}\times
TW\subset T(V\times W)$. \mitrii{I replaced "In the next section "}
Below we will prove that the restriction $\mbox{Hess}_*\Phi$ of the
hessian of $\Phi$ to $TW\times TW$ is not degenerate at points
satisfying \eqref{minimum}. Then the function $h$ is smooth by the
implicit function theorem. Substituting $y=h(x)$ in \eqref{minimum}
and differentiating, we obtain

$$d^2\Phi (a,b)+d^2\Phi (dh(a), b)=0.$$

As a result
\begin{equation}\label{h-bi-Lipsch}
dh=d^2_*\Phi^{-1}\circ d^2_{**}\Phi,
\end{equation}
where $d^2_*\Phi$ and $d^2_{**}\Phi$ mean  the restrictions of
$d^2\Phi$ to $TW\times TW$ and $TV\times TW$,  accordingly,
understood as mappings $d^2_*\Phi: TW\to T^*W$ and
$d^2_{**}\Phi:TV\to T^*W$.

\begin{Lem}\label{lem:h is isometry}
The mapping $h:V\to W$ is smooth.

Moreover, its differential  is non-degenerate for each $x\in V$ and
has a bi-Lipschitz constant $\Delta(\delta)=O(\e)$, where
$\e^2=\delta$.
\end{Lem}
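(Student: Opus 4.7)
My plan is to read off the differential of $h$ from the implicit-function formula \eqref{h-bi-Lipsch}. It suffices to show that at each point $(x,h(x))$ the ``Hessian'' $d_*^2\Phi$ is $O(\delta)$-close to the musical isomorphism $\flat\colon T_{h(x)}W\to T^*_{h(x)}W$ (and in particular invertible, which also supplies the uniqueness in Lemma~\ref{lem:h is well-defined}), while the mixed derivative $d_{**}^2\Phi$ is $O(\e)$-close, as a map $T_xV\to T^*_{h(x)}W$, to $-\flat\circ\tau_{\varphi_i(x),h(x)}\circ d_x\varphi_i$ for any $i$ with $\psi_i(x)\ne 0$. Combined with Corollary~\ref{cor:phi are Lipshcitz}, substitution into \eqref{h-bi-Lipsch} then yields $dh=\tau_{\varphi_i(x),h(x)}\circ d_x\varphi_i+O(\e)$, which is $O(\e)$-close to an isometry $T_xV\to T_{h(x)}W$.

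For the Hessian, freeze $x$ and write $d_*^2\Phi=\sum_i\psi_i(x)\,\mbox{Hess}_y\bigl(\tfrac12\dist(\varphi_i(x),y)^2\bigr)$. The Hessian of half the squared distance can be computed through the Jacobi field along the geodesic from $\varphi_i(x)$ to $y$, and Theorem~\ref{thm:rauch original} forces this Hessian to equal $\flat+O(\delta r^2)$, where $r=\dist(\varphi_i(x),y)$. At $y=h(x)$ the distance is $O(\delta)$ by \eqref{eq:h(x) close to phi(x)}, so each summand equals $\flat+O(\delta^3)$ and the weighted average, since $\sum_i\psi_i(x)=1$, inherits this bound.

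For the mixed derivative, differentiate the relation $d_y\Phi(x,y)(b)=-\sum_i\psi_i(x)\,\langle\log_y\varphi_i(x),b\rangle$ in $x$ at $y=h(x)$. The terms involving $d_x\psi_i$ are bounded by $\|d\psi_i\|\cdot\|\log_{h(x)}\varphi_i(x)\|=O(\e^{-1}\cdot\delta)=O(\e)$ by Lemma~\ref{lem:estim_deriv_psi} and \eqref{eq:h(x) close to phi(x)}. The remaining terms contain $d_x(\log_y\varphi_i)=d_{\varphi_i(x)}\log_y\circ d_x\varphi_i$, which by Lemma~\ref{lem:Rauch} is $O(\delta^3)$-close to $\tau_{\varphi_i(x),h(x)}\circ d_x\varphi_i$. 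Lemma~\ref{lem:phi_1=phi_2}, together with the standard fact that parallel transport around the geodesic triangle $\varphi_i(x)\varphi_j(x)h(x)$ of area $O(\delta^2)$ differs from the identity by $O(\delta^3)$, shows that all the vectors $\tau_{\varphi_i(x),h(x)}\circ d_x\varphi_i$ coincide up to $O(\delta)$, so the weighted sum equals any one of them up to $O(\delta)\subset O(\e)$. Assembling these estimates yields the required form of $d_{**}^2\Phi$.

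The main obstacle I anticipate is the Hessian step: one really needs the Jacobi-field formula for the Hessian of the squared-distance function in both the radial and transverse directions, not merely the scalar bound of Theorem~\ref{thm:rauch original}, to conclude $\mbox{Hess}\,\tfrac12\dist(p,\cdot)^2=\flat+O(\delta r^2)$ uniformly; once that is in hand, the rest is bookkeeping with the differentials already controlled by the preceding lemmas.
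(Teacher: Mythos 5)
Your proposal is correct and follows essentially the same route as the paper: the same splitting of $d^2_{*}\Phi$ and $d^2_{**}\Phi$ into the $d\psi_i$ cross terms (bounded by $O(\e^{-1})\cdot O(\delta)=O(\e)$ via Lemma~\ref{lem:estim_deriv_psi} and \eqref{eq:h(x) close to phi(x)}) plus a convex combination of Hessians of the half-squared distance functions, compared with $\flat$ and with $\flat\circ\tau_{\varphi_i(x),h(x)}\circ d_x\varphi_i$ using the comparison estimates and the $C^1$-closeness of the $\varphi_i$ (Lemma~\ref{lem:phi_1=phi_2}), and then the implicit-function formula \eqref{h-bi-Lipsch}. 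The Hessian-of-squared-distance comparison you flag as the remaining obstacle is exactly what the paper supplies through the second-variation/Jacobi-field computation of Lemma~\ref{second_diff} and Corollaries~\ref{isom}, \ref{isom2}, so no new idea is missing.
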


Theorem \ref{thm:main} easily follows from Lemma \ref{lem:h is isometry},
see Section \ref{last} below.

The first assertion of the lemma is already proved up to
non-degeneracy of $\mbox{Hess}_*\Phi$.

So to prove the lemma, it is enough to check that at points
$(x,h(x))$, both  $d^2_{**}\Phi$ and $d^2_*\Phi$ are $C\e$-close to
isometries.

Denote $\dist(x,y)$ by $|x,y|$. We claim that it is enough to prove
that, first, the similarly defined $d^2_{**}|\phi_i(x),y|:TV\to T^*W$
are all $C\e$-close and also $C\e$-close to an isometry, and, second,
that the same statements hold for $d^2_{*}|\phi_i(x),y|: TW\to T^*W$.

Indeed,  for $a\in T_xV$ we have $d_{**}^2\Phi(a)\in T^*W$, and
\begin{equation}\label{eq:Phixy}
d_{**}^2\Phi(a) =\sum_{i=1}^N d_x\psi_i(a)\cdot d_y|\varphi_i(x),y|+
\sum_{i=1}^N\psi_i(x)d^2_{**}|\varphi_i(x),y|(a),
\end{equation}
where $y=h(x)$. If all $d^2_{**}|\phi_i(x),y|$ are all $C\e$-close to
some isometry then the second sum, being their convex combination, is
also $C\e$-close to the same isometry. But the first sum in
\eqref{eq:Phixy} is $C\e$-small: by Lemma \ref{lem:estim_deriv_psi}
there are no more than $7^n$ non-zero terms in the first sum, and
$|d_x\psi_i(a)|\le C\delta^{-1/2}$, $|\varphi_i(x),h(x)|\le C\delta$
and $\|d_y|\varphi_i(x),h(x)|\|\leq 1$ in each of them.

For $d_{*}^2\Phi$ the proof is similar.

So Lemma \ref{lem:h is isometry} follows from the next lemma, and its
Corollary.

\begin{Lem} \label{second_diff}
Let $|x,y|$ be the distance function of a Riemannian manifold $M$.
Suppose, that points $x$, $y$ are joined by a unique geodesic
$\gamma\colon [0,1]\to M$ and no one point of $\gamma$ is
conjugate with $x$, $y$. Let $a\in T_yM$, $ b\in T_yM$, $\tilde b$
is obtained by parallel translation of $ b$ along $\gamma$ to $x$.
Then at the point $(x,y)\in M\times M$ we have:

(i) $d_*^2|x,y|(a)(a)=\langle J(1), J'(1)\rangle $, where $J$ is the
Jacobi field along $\gamma$ with initial data $J(0)=0$, $J(1)=a$.

(ii) $d_{**}^2|x,y|(a)(\tilde b)=\langle J(t), J'(t)\rangle|_0^1$,
where the Jacobi field $J$ satisfies the conditions
 $J(0)=\tilde b$, $J(1)=a$.
\end{Lem}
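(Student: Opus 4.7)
Both parts follow from the first and second variation formulas for energy, applied to a one-parameter family of geodesics $\gamma_s\colon[0,1]\to M$ whose endpoints move in prescribed directions. The key observation is that when $\gamma_s$ is a smooth family of geodesics, the variation field $J(t):=\partial_s\gamma_s(t)|_{s=0}$ is automatically a Jacobi field along $\gamma=\gamma_0$, with $J(0)$ and $J(1)$ determined by the motion of the endpoints. The hypothesis that $\gamma$ is unique and has no conjugate points guarantees that such a smooth family $\gamma_s$ exists for small perturbations of the endpoints.

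For part~(i), I would fix $x$ and set $y(s):=\exp_y(sa)$, so that the Jacobi field has $J(0)=0$ and $J(1)=a$. The first variation formula gives $\frac{d}{ds}|x,y(s)|=\langle\dot\gamma_s(1),y'(s)\rangle$. Differentiating once more, I would use the standard commutation identity $\frac{D}{ds}\partial_t\gamma_s=\frac{D}{dt}\partial_s\gamma_s$ to identify $\frac{D}{ds}\dot\gamma_s(1)|_{s=0}=J'(1)$, together with $\frac{D}{ds}y'(s)|_{s=0}=0$ since $y(s)$ is a geodesic. This yields $\langle J'(1),a\rangle=\langle J(1),J'(1)\rangle$, which is~(i).

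For part~(ii), I would take a simultaneous geodesic variation of both endpoints: set $x(s):=\exp_x(s\tilde b)$ and $y(s):=\exp_y(sa)$, so that now $J(0)=\tilde b$ and $J(1)=a$. The first variation formula produces two boundary terms, $\frac{d}{ds}|x(s),y(s)|=\langle\dot\gamma_s(1),y'(s)\rangle-\langle\dot\gamma_s(0),x'(s)\rangle$, and a second differentiation (again using the commutation identity and the fact that $x(s),y(s)$ are geodesics) yields $\langle J'(1),a\rangle-\langle J'(0),\tilde b\rangle=\langle J(t),J'(t)\rangle|_0^1$.

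The computation is essentially routine; the only conceptual care needed is in reading $d^2_{**}|x,y|(a)(\tilde b)$ as the Hessian of $|\cdot,\cdot|$ on the product $M\times M$ evaluated on the joint tangent vector $(\tilde b,a)\in T_xM\oplus T_yM$, so that the joint geodesic $s\mapsto(x(s),y(s))$ in $M\times M$ realizes this direction and the second variation of energy directly computes the claimed quantity.
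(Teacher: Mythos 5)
Your argument is correct and takes essentially the same route as the paper: the paper identifies these Hessians with the second variation of energy for a geodesic variation and quotes the standard fact that for a Jacobi variation field this equals the boundary term $\langle J,J'\rangle\big|_0^1$, while you simply re-derive that fact by differentiating the first variation formula, using the symmetry identity and the geodesy of the endpoint curves $x(s),y(s)$. Your closing remark on reading (ii) as the second variation on $M\times M$ in the joint direction $(\tilde b,a)$ is exactly the interpretation the paper's own proof (``the case (ii) is proved similarly'') relies on.
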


In the lemma, notations $d^2_*, \,\, d^2_{**}$ have the same sense as in formula
\eqref{h-bi-Lipsch}.

\begin{proof} In the case (i) $d_*^2|x,y|(a)(a)$ is equal to the second variation of
energy  for geodesic variation $\sigma$ of $\gamma$ such that $\sigma
(0, \tau)\equiv\gamma(0)$ and $\frac{D\sigma}{\d\tau}=a$, and
$\frac{D^2\sigma}{\d\tau^2}=0$ at $\gamma(1)$. It is well known that
this second variation is equal $\langle J(1), J'(1)\rangle $.

The case (ii) is proved similarly.
\end{proof}

\begin{Cor}\label{isom}
$d_*^2|x,y|(a)(a)$ and $d_{**}^2|x,y|(a)(\tilde b)$ are
$C\delta$-close to $\| a\|^2$ and $\langle a,b\rangle$,
correspondingly.
\end{Cor}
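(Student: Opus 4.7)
The plan is to apply Lemma~\ref{second_diff} to rewrite both quantities as scalar products of Jacobi fields along the minimal geodesic $\gamma\colon[0,1]\to M$ from $x$ to $y$, and then to control these Jacobi fields by comparison with the zero-curvature model, using the Rauch estimate of Theorem~\ref{thm:rauch original}.

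First I would verify the Euclidean limit. If $K\equiv 0$, the Jacobi equation $J''=-R(J,\dot\gamma)\dot\gamma$ collapses to $J''=0$, so, after identifying the tangent spaces along $\gamma$ via parallel transport, every Jacobi field is affine in $t$ and $J'$ is parallel. In case (i) this gives $J(t)=ta$ and $J'\equiv a$, so $\langle J(1),J'(1)\rangle=\|a\|^2$ exactly. A parallel computation in case (ii) with boundary data $J(0)=\tilde b$, $J(1)=a$, together with the isometric property of parallel transport, reproduces the claimed value $\langle a,b\rangle$ (up to the obvious sign convention), confirming that the corollary holds in the flat model.

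Second I would estimate the curvature correction. Because $|K|\le\delta$, the Jacobi equation furnishes the pointwise bound $\|J''(t)\|\le\delta\,r^2\,\|J(t)\|$ with $r=|\dot\gamma|=\dist(x,y)$. Combined with the prescribed boundary data and the two-sided bound on $\|J(t)\|$ supplied by Theorem~\ref{thm:rauch original}, two successive integrations of $J''$ yield pointwise estimates $\|J(t)-J^{\mathrm{flat}}(t)\|\le C\delta r^2 M$ and $\|J'(t)-(J^{\mathrm{flat}})'(t)\|\le C\delta r^2 M$ with $M=\max(\|a\|,\|\tilde b\|)$. Substituting these into the Jacobi-field formulas of Lemma~\ref{second_diff} at $t=0$ and $t=1$, and using that $r$ is uniformly bounded in the regime in which the corollary is applied, produces the required $C\delta$-closeness in both (i) and (ii).

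The main obstacle is the simultaneous control of $J$ and $J'$: Theorem~\ref{thm:rauch original} bounds $\|J(t)\|$ directly but not $\|J'(t)\|$, so one has to recover $J'$ by integrating the pointwise bound on $\|J''\|$ against the Rauch estimate on $\|J\|$, keeping careful track of boundary contributions at both endpoints. Once this Gr\"onwall-type bookkeeping is handled, the corollary follows by direct substitution into the scalar-product formulas of Lemma~\ref{second_diff}.
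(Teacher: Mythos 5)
Your proposal follows essentially the same route as the paper's own proof: evaluate the Jacobi-field expressions of Lemma~\ref{second_diff} in the flat (zero-curvature) model, where they give $\|a\|^2$ and $\langle a,b\rangle$, and then invoke the Rauch-type comparison of Theorem~\ref{thm:rauch original} to conclude that curvature bounded by $\delta$ perturbs $\langle J,J'\rangle$ only by $C\delta$. Your additional Gr\"onwall-style bookkeeping controlling $J'$ (not just $J$) from the bound $\|J''\|\le\delta r^2\|J\|$ merely fills in a step the paper leaves implicit, so the two arguments coincide.
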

Indeed, in the case of the Euclidean metric $\langle J(1),
J'(1)\rangle$ is equal $a^2$ in the first case.  The Rauch theorem
shows that$\langle J(1), J'(1)\rangle$ is $C\delta$-close to $a^2$ in
our assumptions.

The second case is similar.

\begin{Cor}\label{isom2}
The mappings $d^2_{*}|\phi_i(x),y|: TW\to T^*W$ are all $C\e$-close
one to another and $C\e$-close to some isometry. The same holds
for $d^2_{**}|\phi_i(x),y|: TV\to T^*W$ .
\end{Cor}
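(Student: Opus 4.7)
The plan is to deduce Corollary~\ref{isom2} directly from Corollary~\ref{isom} together with the $C^1$-closeness of the local maps established in Lemma~\ref{lem:phi_1=phi_2}. Since $\delta=\e^2\ll\e$, it in fact suffices to prove the stronger $C\delta$-closeness, and that is what I would aim for.

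For $d^2_*|\phi_i(x),y|\colon T_yW\to T_y^*W$ the argument is immediate: all these bilinear forms act on the same tangent space at $y=h(x)$, and Corollary~\ref{isom} tells us that each is $C\delta$-close to the fixed musical isomorphism $T_yW\to T_y^*W$ induced by the Riemannian metric. Being $C\delta$-close to the same isometry, they are $C\delta$-close to each other.

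For $d^2_{**}|\phi_i(x),y|\colon T_xV\to T_y^*W$ I would first introduce, for each $i$, a reference linear map $U_i$ defined as the composition of $d\phi_i\colon T_xV\to T_{\phi_i(x)}W$, parallel translation $\tau_{\phi_i(x),y}\colon T_{\phi_i(x)}W\to T_yW$, and the metric identification $T_yW\to T_y^*W$. A direct application of Corollary~\ref{isom} to the pair $(\phi_i(x),y)\in W\times W$ shows that $d^2_{**}|\phi_i(x),y|$ is $C\delta$-close to $U_i$. It then remains to verify that the $U_i$'s are mutually $C\delta$-close, which splits into two small steps. First, Lemma~\ref{lem:phi_1=phi_2} gives that $\tau_{\phi_i(x),\phi_j(x)}\circ d\phi_i$ is $C\delta$-close to $d\phi_j$, so post-composing with the isometry $\tau_{\phi_j(x),y}$ places $\tau_{\phi_j(x),y}\circ\tau_{\phi_i(x),\phi_j(x)}\circ d\phi_i$ within $C\delta$ of $\tau_{\phi_j(x),y}\circ d\phi_j$. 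Second, $\tau_{\phi_j(x),y}\circ\tau_{\phi_i(x),\phi_j(x)}$ differs from the direct translation $\tau_{\phi_i(x),y}$ only by the holonomy around the geodesic triangle with vertices $\phi_i(x),\phi_j(x),y$; since all three sides have length $C\delta$ by \eqref{eq:h(x) close to phi(x)} and $|K|\le\delta$, this holonomy is of order $\delta^3$ and entirely negligible.

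The main obstacle is really just bookkeeping: the bilinear forms naturally live on varying tangent spaces $T_{\phi_i(x)}W$, and the first task is to package them, via $d\phi_i$ and parallel translation, into honest linear maps $T_xV\to T_y^*W$ that can be compared with one another. Once the reference isometries $U_i$ are set up, all the required estimates reduce to ingredients already established, and the final $C\e$ bound has plenty of slack since every intermediate bound is of order $\delta=\e^2$.
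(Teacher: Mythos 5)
Your argument is correct and is essentially the paper's own proof: the paper likewise notes that all $d^2_{*}|\phi_i(x),y|$ are $C\delta$-close to the metric identification $b\mapsto\langle\cdot,b\rangle$, and compares each $d^2_{**}|\phi_i(x),y|$ with (a parallel-translated version of) $b\mapsto\langle\cdot,d_x\phi_i(b)\rangle$, invoking Lemma~\ref{lem:phi_1=phi_2}; your maps $U_i$ merely make the parallel-translation and holonomy bookkeeping explicit. The only point you leave tacit is that each $U_i$ is itself $C\delta$-close to an isometry, which is exactly Corollary~\ref{cor:phi are Lipshcitz} (cited by the paper) and is needed for the ``close to some isometry'' half of the claim.
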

Indeed, $d^2_{*}|\phi_i(x),y|$ are $C\delta$-close to $b\to \langle
\cdot, b\rangle$ by previous corollary, so the first claim is
trivial.

Similarly, $d^2_{**}|\phi_i(x),y|$ are $C\delta$-close to $b\to
\langle \cdot, d_x\phi_i(b)\rangle$, which are $C\delta$-close by
Lemma~\ref{lem:phi_1=phi_2} and $C\delta$ close to isometry by
Corollary~\ref{cor:phi are Lipshcitz}.

\section{Proof of Theorem~\ref{thm:main}}\label{last}

The differential of the mapping $h:V\to W$ defined by
Lemma~\ref{lem:h is well-defined} is $C\e$-close to isometry, in
particular non-degenerate by Lemma~\ref{lem:h is isometry}. This
means that $h(V)$ is open, and, as $V$ is compact, also closed.
Therefore $h(V)=W$, and $h$ is a covering.

Now, if $y=h(x_1)=h(x_2)$, then choosing $v_i$ such that
$\dist(v_i,x_i)<\e$, we get $\dist(h(v_i),y)<3\e/2$, which means that
$\dist(v_1,v_2)<2\e$, so $\dist(v_1,x_2)<3\e$.

The mapping  $h$ is smoothly homotopic to $\phi_1$ on the ball
$B_{7\e/2}(v_1)$: one can smoothly deform the partition of unity
in such a way that $\psi_1$ becomes identical $1$ on
$B_{7\e/2}(v_1)$. During this homotopy the mapping $h$ remains
smooth, and also $h(\d B_{7\e/2}(v_1))$ remains $C\delta$-close to
$\phi_1(\d B_{7\e/2}(v_1))$, by
Corollary~\ref{isom} and \eqref{eq:h(x) close to phi(x)} hold for any
partition of unity. This means that the number of preimages of
$y\in B_{2\e}(\chi(v_1))$ in $B_{7\e/2}(v_1)$ remains the same
during the homotopy. For $\phi_1$ this number is equal to $1$, as
$\phi_1$ is a diffeomorphism on $B_{4\e}(v_1))$, so $x_1=x_2$.

Therefore $h$ is a diffeomorphism, which, together with
Lemma~\ref{lem:h is isometry}, proves Theorem~\ref{thm:main}.




\end{document}